\theoremstyle{thmstyleone}%
\newtheorem{theorem}{Theorem}
\newtheorem{proposition}[theorem]{Proposition}%
\theoremstyle{thmstyletwo}%
\newtheorem{lemma}{Lemma}%
\theoremstyle{thmstylethree}%
\begin{document}

\newcommand{\hide}[1]{}
\newcommand{\tbox}[1]{\mbox{\tiny #1}}
\newcommand{\half}{\mbox{\small $\frac{1}{2}$}}
\newcommand{\sinc}{\mbox{sinc}}
\newcommand{\const}{\mbox{const}}
\newcommand{\trc}{\mbox{trace}}
\newcommand{\intt}{\int\!\!\!\!\int }
\newcommand{\ointt}{\int\!\!\!\!\int\!\!\!\!\!\circ\ }
\newcommand{\eexp}{\mbox{e}^}
\newcommand{\bra}{\left\langle}
\newcommand{\ket}{\right\rangle}
\newcommand{\EPS} {\mbox{\LARGE $\epsilon$}}
\newcommand{\ar}{\mathsf r}
\newcommand{\bmsf}[1]{\bm{\mathsf{#1}}}
\newcommand{\mpg}[2][1.0\hsize]{\begin{minipage}[b]{#1}{#2}\end{minipage}}

\newcommand{\CC}{\mathbb{C}}
\newcommand{\NN}{\mathbb{N}}
\newcommand{\PP}{\mathbb{P}}
\newcommand{\RR}{\mathbb{R}}
\newcommand{\QQ}{\mathbb{Q}}
\newcommand{\ZZ}{\mathbb{Z}}

\title[Multiplicative topological indices: Analytical and statistical properties]{Multiplicative topological indices: Analytical properties and application to random networks}


\author[1]{\fnm{R.} \sur{Aguilar-S\'anchez}}\email{ras747698@gmail.com}

\author[2]{\fnm{J. A.} \sur{M\'endez-Berm\'udez}}\email{jmendezb@ifuap.buap.mx}
\equalcont{These authors contributed equally to this work.}

\author[3]{\fnm{Jos\'e M.} \sur{Rodr\'iguez}}\email{jomaro@math.uc3m.es}
\equalcont{These authors contributed equally to this work.}

\author*[4]{\fnm{Jos\'e M.} \sur{Sigarreta}}\email{josemariasigarretaalmira@hotmail.com}
\equalcont{These authors contributed equally to this work.}

\affil[1]{\orgdiv{Facultad de Ciencias Qu\'imicas}, \orgname{Benem\'erita Universidad
Aut\'onoma de Puebla}, \city{Puebla}, \postcode{72570}, \state{Puebla}, \country{Mexico}}

\affil[2]{\orgdiv{Instituto de F\'{\i}sica}, \orgname{Benem\'erita Universidad
Aut\'onoma de Puebla}, \orgaddress{\street{Apartado Postal J-48}, \city{Puebla}, \postcode{72570}, \state{Puebla}, \country{Mexico}}}

\affil[3]{\orgdiv{Departamento de Matem\'aticas}, \orgname{Universidad Carlos III de Madrid}, \orgaddress{\street{Avenida de la Universidad 30}, \city{Legan\'es}, \postcode{28911}, \state{Madrid}, \country{Spain}}}

\affil[4]{\orgdiv{Facultad de Matem\'aticas}, \orgname{Universidad Aut\'onoma de Guerrero}, \orgaddress{\street{Carlos E. Adame No.54 Col. Garita}, \city{Acapulco}, \postcode{610101}, \state{Guerrero}, \country{Mexico}}}


\abstract{We make use of multiplicative degree-based topological indices $X_\Pi(G)$ to perform a detailed
analytical and statistical study of random networks $G=(V(G),E(G))$.
We consider two classes of indices: $X_\Pi(G) = \prod_{u \in V(G)} F_V(d_u)$ and
$X_\Pi(G) = \prod_{uv \in E(G)} F_E(d_u,d_v)$,
where $uv$ denotes the edge of $G$ connecting the vertices $u$ and $v$, $d_u$ is the degree
of the vertex $u$, and $F_V(x)$ and $F_E(x,y)$ are functions of the vertex degrees.
Specifically, we find analytical inequalities involving these multiplicative indices.
Also, we apply $X_\Pi(G)$ on three models of random networks: Erd\"os-R\'enyi networks,
random geometric graphs, and bipartite random networks.
We show that $\left< \ln X_\Pi(G) \right>$, normalized to the order of the network, scale with
the corresponding average degree; here $\left< \cdot \right>$ denotes the average over an
ensemble of random networks. }

\keywords{random networks, degree--based topological indices, scaling approach}



\maketitle

\section{Introduction}

We can identify two types of graph invariants which are currently been studied in
chemical graph theory, namely
\begin{equation}
\begin{aligned}
& X_\Sigma(G) = X_{\Sigma,F_V}(G) = \sum_{u \in V(G)} F_V(d_u) \qquad \mbox{or} \qquad
\\
& X_\Sigma(G) = X_{\Sigma,F_E}(G) = \sum_{uv \in E(G)} F_E(d_u,d_v)
\end{aligned}
\label{TI}
\end{equation}
and
\begin{equation}
\begin{aligned}
& X_\Pi(G) = X_{\Pi,F_V}(G) = \prod_{u \in V(G)} F_V(d_u) \qquad \mbox{or} \qquad
\\
& X_\Pi(G) = X_{\Pi,F_E}(G) = \prod_{uv \in E(G)} F_E(d_u,d_v) \, .
\end{aligned}
\label{MTI}
\end{equation}
Here $uv$ denotes the edge of the graph $G=(V(G),E(G))$ connecting the vertices $u$ and $v$,
$d_u$ is the degree of the vertex $u$, and $F_V(x)$ and $F_V(x,y)$ are appropriate chosen functions,
see e.g.~\cite{G13}.
While both $X_\Sigma(G)$ and $X_\Pi(G)$ are referred as topological indices in the literature, to make
a distinction between them, here we name $X_\Pi(G)$ as {\it multiplicative} topological indices (MTIs).

Among the vast amount of topological indices of the form given by Eq.~(\ref{TI}) we can refer to
some prominent examples:
the first and second Zagreb indices~\cite{G72}
\begin{equation}
M_1(G) = \sum_{u\in V(G)} d_u^2 = \sum_{uv\in E(G)} d_u + d_v
\label{M1}
\end{equation}
and
\begin{equation}
M_2(G) = \sum_{uv\in E(G)} d_ud_v ,
\label{M2}
\end{equation}
respectively, the Randi\'c connectivity index~\cite{R75}
\begin{equation}
R(G) = \sum_{uv\in E(G)} \frac{1}{\sqrt{d_ud_v}} ,
\label{R}
\end{equation}
the harmonic index~\cite{F87}
\begin{equation}
H(G) = \sum_{uv\in E(G)} \frac{2}{d_u + d_v} ,
\label{H}
\end{equation}
the sum-connectivity index~\cite{ZT46}
\begin{equation}
\chi(G) = \sum_{uv\in E(G)} \frac{1}{\sqrt{d_u+d_v}} ,
\label{X}
\end{equation}
and the inverse degree index~\cite{F87}
\begin{equation}
ID(G) = \sum_{u\in V(G)} \frac{1}{d_u} = \sum_{uv\in E(G)} \left( \frac{1}{d_u^2} + \frac{1}{d_v^2} \right) .
\label{ID}
\end{equation}

In fact, within a statistical approach on random networks, it has been recently shown that the average
values of indices of the type $X_\Sigma(G)$, normalized to the order of the network $n$, scale with the
average degree $\left< k \right>$; see e.g.~\cite{MMRS20,AHMS20,MMRS21}.
That is, $\left< X_\Sigma(G) \right>/n$ is a function of $\left< k \right>$ only.
Moreover, it was also found that $\left< X_\Sigma(G) \right>$, for indices like $R(G)$ and $H(G)$,
is highly correlated with the Shannon entropy of the eigenvectors of the adjacency matrix of random
networks~\cite{AMRS20}. This is a notable result because it puts forward the application of topological
indices beyond mathematical chemistry. Specifically, given que equivalence of the Hamiltonian
of a tight-binding network (in the proper setup) and the corresponding network adjacency matrix,
either $\left< R(G) \right>$ or $\left< H(G) \right>$ could be used to determine the eigenvector
localization and delocalization regimes which in turn determine the insulator and metallic regimes
of quantum transport.

Therefore, motivated by their potential applications, in this work we study MTIs, see Eq.~(\ref{MTI}), on
random networks.

Indeed, several MTIs have been already deeply analyzed in the literature (see e.g. the work of V. R. Kulli), 
among the most relevant we have to mention the Narumi-Katayama index~\cite{NK84}
\begin{equation}
NK(G) = \prod_{u\in V(G)} d_u ,
\label{NK}
\end{equation}
and multiplicative versions of the Zagreb indices~\cite{TC1084}:
\begin{equation}
\Pi_1(G) = \prod_{u\in V(G)} d_u^2 ,
\label{P1}
\end{equation}
\begin{equation}
\Pi_2(G) = \prod_{uv\in E(G)} d_ud_v ,
\label{P2}
\end{equation}
and
\begin{equation}
\Pi_1^*(G) = \prod_{uv\in E(G)} d_u + d_v .
\label{P1*}
\end{equation}
In addition to the MTIs of Eqs.~(\ref{NK}-\ref{P1*}), we also consider multiplicative versions of the
indices in Eqs.~(\ref{R}-\ref{ID}):
the multiplicative Randi\'c connectivity index
\begin{equation}
R_\Pi(G) = \prod_{uv\in E(G)} \frac{1}{\sqrt{d_ud_v}} ,
\label{mR}
\end{equation}
the multiplicative harmonic index
\begin{equation}
H_\Pi(G) = \prod_{uv\in E(G)} \frac{2}{d_u + d_v} ,
\label{mH}
\end{equation}
the multiplicative sum-connectivity index
\begin{equation}
\chi_\Pi(G) = \prod_{uv\in E(G)} \frac{1}{\sqrt{d_u+d_v}} ,
\label{mX}
\end{equation}
and the multiplicative inverse degree index
\begin{equation}
ID_\Pi(G) = \prod_{uv\in E(G)} \left( \frac{1}{d_u^2} + \frac{1}{d_v^2} \right) .
\label{mID}
\end{equation}

Below we apply the MTIs of Eqs.~(\ref{NK}-\ref{mID}) on three models of random
networks:
Erd\"os-R\'enyi (ER) networks, random geometric (RG) graphs, and bipartite random (BR) networks.
ER networks~\cite{SR51,ER59,ER60} $G_{\tbox{ER}}(n,p)$ are formed by $n$ vertices connected independently
with probability $p \in [0,1]$.
While RG graphs~\cite{DC02,P03} $G_{\tbox{RG}}(n,r)$ consist of $n$ vertices uniformly and independently
distributed on the unit square, where two vertices are connected by an edge if their Euclidean distance is less
or equal than the connection radius $r \in [0,\sqrt{2}]$.
In addition we examine BR networks $G_{\tbox{BR}}(n_1,n_2,p)$ composed by two disjoint sets, set 1 and set 2,
with $n_1$ and $n_2$ vertices each such that there are no adjacent vertices within the same set, being
$n=n_1+n_2$ the total number of vertices in the bipartite network. The vertices of the two sets are connected
randomly with probability $p \in [0,1]$.


The purpose of this work is threefold.
First, from an analytical viewpoint, we find several inequalities that relate multiplicative indices to their additive versions;
second, we want to establish the statistical
analysis of MTIs as a generic tool for studying average properties of random networks; and third, we
perform for the first time (to our knowledge), a scaling study of MTIs on random networks.

\section{Some inequalities involving multiplicative topological indices}

We obtain in this section several analytical inequalities involving general multiplicative topological indices $X_{\Pi}$.
There are many papers studying particular multiplicative topological indices (see, e.g., \cite{GutmanMilovanovic}, \cite{RetiGutman} and the references therein).

The following equalities are direct:
$$
\begin{aligned}
X_{\Pi,F_V}(G)
& = \prod_{u \in V(G)} F_V(d_u)
= e^{\sum_{u \in V(G)} \log F_V(d_u) }
= e^{X_{\Sigma,\log F_V}(G) } ,
\\
X_{\Pi,F_E}(G)
& = \prod_{uv \in E(G)} F_E(d_u,d_v)
= e^{\sum_{uv \in E(G)} \log F_E(d_u,d_v) }
= e^{X_{\Sigma,\log F_E}(G) } .
\end{aligned}
$$

Since the geometric mean is at most the arithmetic mean (by Jensen's inequality), we have the following inequalities.

\begin{proposition} \label{p:Jensen}
Let $G$ be a graph with $n$ vertices and $m$ edges.
Then
$$
\begin{aligned}
X_{\Pi,F_V}(G)^{1/n}
& \le \frac1n \, X_{\Sigma,F_V}(G) ,
\qquad
X_{\Pi,F_E}(G)^{1/m}
\le \frac1m \, X_{\Sigma,F_E}(G).
\end{aligned}
$$
\end{proposition}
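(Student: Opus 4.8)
The plan is to obtain both inequalities as immediate consequences of the classical arithmetic--geometric mean inequality, which is exactly the instance of Jensen's inequality alluded to in the paragraph preceding the statement; the identities displayed just above (expressing $X_\Pi$ as $e^{X_\Sigma}$ with $\log F$ in place of $F$) are not strictly needed, although they furnish an equivalent derivation through the concavity of the logarithm.

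First I would dispose of the vertex case. Assuming, as is implicit for the indices considered throughout the paper, that $F_V$ is positive on the degrees occurring in $G$, I would apply the AM--GM inequality to the $n$ positive reals $F_V(d_u)$, $u \in V(G)$:
$$
\left( \prod_{u \in V(G)} F_V(d_u) \right)^{1/n} \le \frac1n \sum_{u \in V(G)} F_V(d_u) .
$$
By the definitions in (\ref{MTI}) and (\ref{TI}) the left-hand side is $X_{\Pi,F_V}(G)^{1/n}$ and the right-hand side is $\frac1n X_{\Sigma,F_V}(G)$, which is the first assertion. Equivalently, one applies Jensen's inequality to the concave function $t \mapsto \log t$ on $(0,\infty)$ to get $\log\!\left(\frac1n\sum_u F_V(d_u)\right) \ge \frac1n\sum_u \log F_V(d_u) = \log\!\left(\prod_u F_V(d_u)\right)^{1/n}$, and then exponentiates. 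The edge case is handled in exactly the same way, applying AM--GM (or Jensen for $\log$) to the $m$ positive reals $F_E(d_u,d_v)$, $uv \in E(G)$, which yields $X_{\Pi,F_E}(G)^{1/m} \le \frac1m X_{\Sigma,F_E}(G)$.

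There is no genuine obstacle here; the single point deserving a word of care is the positivity of $F_V$ (respectively $F_E$) on the relevant arguments, without which neither the $n$-th root of the product nor $\log F$ makes sense --- but this holds for every concrete index appearing in (\ref{NK})--(\ref{mID}). For completeness I would also record that equality is attained precisely when $F_V$ is constant along the degree sequence (respectively $F_E$ is constant over $E(G)$), e.g.\ for regular graphs.
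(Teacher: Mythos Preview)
Your proof is correct and follows exactly the approach indicated in the paper, which simply remarks that the geometric mean is at most the arithmetic mean (by Jensen's inequality) and states the proposition without further elaboration. Your write-up merely fills in the details of that one-line justification.
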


In \cite{BQRS} appears the following Jensen-type inequality.

\begin{theorem} \label{t:Jensen3}
Let $\mu$ be a probability measure on the space $X$ and $a \le b$ real constants.
If $f: X \rightarrow [a,b]$ is a measurable function and $\varphi$ is a convex function on $[a,b]$, then
$f$ and $\varphi \circ f$ are $\mu$-integrable functions and
$$
\varphi \Big( a+b - \int _{X} f\,d\mu \Big)
\le \varphi(a) + \varphi(b) - \int _{X}\varphi \circ f\,d\mu .
$$
\end{theorem}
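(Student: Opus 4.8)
The plan is to prove Theorem~\ref{t:Jensen3} by reducing the statement to the ordinary Jensen inequality applied on the interval $[a,b]$. First I would observe that integrability is essentially automatic: since $f$ takes values in the bounded interval $[a,b]$ and $\mu$ is a probability measure, $f$ is $\mu$-integrable, and $\varphi$, being convex on a closed bounded interval, is continuous on $(a,b)$ and bounded on $[a,b]$, hence $\varphi \circ f$ is $\mu$-integrable as well. So the content is the displayed inequality.

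The key idea is the substitution $g := a + b - f$. Then $g$ also maps $X$ into $[a,b]$ (because $f \in [a,b] \iff a+b-f \in [a,b]$), and $\int_X g\, d\mu = a + b - \int_X f\, d\mu$, so the left-hand side of the claimed inequality is exactly $\varphi\big(\int_X g\, d\mu\big)$. By the classical Jensen inequality applied to the convex function $\varphi$ and the $[a,b]$-valued function $g$,
$$
\varphi\Big(\int_X g\, d\mu\Big) \le \int_X \varphi \circ g\, d\mu = \int_X \varphi(a+b-f)\, d\mu .
$$
Thus it suffices to show the pointwise bound $\varphi(a+b-t) \le \varphi(a) + \varphi(b) - \varphi(t)$ for every $t \in [a,b]$, and then integrate this over $X$ with respect to $\mu$ (using that $\mu$ has total mass $1$, so the constants $\varphi(a)+\varphi(b)$ integrate to themselves).

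The pointwise inequality is the heart of the matter, and it is where convexity is genuinely used. For fixed $t \in [a,b]$, write $t = (1-\lambda)a + \lambda b$ with $\lambda \in [0,1]$; then $a+b-t = \lambda a + (1-\lambda) b$. Convexity gives simultaneously $\varphi(t) \le (1-\lambda)\varphi(a) + \lambda\varphi(b)$ and $\varphi(a+b-t) \le \lambda\varphi(a) + (1-\lambda)\varphi(b)$. Adding these two inequalities yields $\varphi(t) + \varphi(a+b-t) \le \varphi(a) + \varphi(b)$, which is precisely the desired pointwise bound. I expect this elementary two-line convexity estimate — recognizing that both $t$ and its reflection $a+b-t$ are the \emph{same} convex combination of the endpoints with the weights swapped — to be the main (though modest) obstacle; everything else is bookkeeping with the probability measure. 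Finally, chaining the pointwise bound (integrated) with the Jensen step above completes the proof.
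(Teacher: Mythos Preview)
Your argument is correct. The integrability remarks are sound (a convex function on $[a,b]$ is bounded and Borel measurable, so $\varphi\circ f$ is integrable against a probability measure), and the two-step estimate --- first Jensen applied to $g=a+b-f$, then the pointwise inequality $\varphi(t)+\varphi(a+b-t)\le\varphi(a)+\varphi(b)$ obtained by writing $t$ and its reflection as the same convex combination of $a,b$ with swapped weights --- is clean and complete.

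Note, however, that the paper does not prove Theorem~\ref{t:Jensen3} at all: it is quoted from \cite{BQRS} and then used as a black box to derive Theorem~\ref{t:Jensen4}. So there is no proof in the paper to compare yours against; you have supplied a self-contained proof where the authors chose to cite one.
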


Theorem \ref{t:Jensen3} allows to find the following converse version of Proposition \ref{p:Jensen}.

\begin{theorem} \label{t:Jensen4}
Let $G$ be a graph with $n$ vertices and $m$ edges,
$a_V \le b_V$, $a_E \le b_E$ be real constants and $F_V$, $F_E$ be functions satisfying
$$
a_V \le F_V(d_u) \le b_V
\quad \forall \, u \in V(G),
\qquad
a_E \le F_E(d_u,d_v) \le b_E
\quad \forall \, uv \in E(G).
$$
Then
$$
\begin{aligned}
\frac1n\, X_{\Sigma,F_V}(G)
& \le e^{ a_V} + e^{b_V} - \frac{e^{ a_V+b_V}}{X_{\Pi,F_V}(G)^{1/n}} \,,
\\
\frac1m\, X_{\Sigma,F_E}(G)
& \le e^{ a_E} + e^{b_E} - \frac{e^{ a_E+b_E}}{X_{\Pi,F_E}(G)^{1/m}} \, .
\end{aligned}
$$
\end{theorem}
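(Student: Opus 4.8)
The plan is to derive Theorem~\ref{t:Jensen4} as a direct consequence of Theorem~\ref{t:Jensen3}, choosing the probability space, the measurable function, and the convex function so that the abstract inequality collapses into the stated bounds. For the vertex version, I would take $X = V(G)$ with $\mu$ the uniform (counting) probability measure assigning mass $1/n$ to each vertex, let $f(u) = F_V(d_u)$, and let $\varphi(t) = e^t$, which is convex on all of $\RR$ and in particular on $[a_V,b_V]$. The hypothesis $a_V \le F_V(d_u) \le b_V$ for all $u$ is exactly the condition that $f$ maps into $[a_V,b_V]$, so Theorem~\ref{t:Jensen3} applies verbatim.

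Next I would compute each piece of the conclusion of Theorem~\ref{t:Jensen3} under these choices. We have $\int_X f\,d\mu = \frac1n \sum_{u\in V(G)} F_V(d_u) = \frac1n X_{\Sigma,F_V}(G)$, so the left-hand side becomes $\exp\!\big(a_V + b_V - \frac1n X_{\Sigma,F_V}(G)\big)$. On the right-hand side, $\varphi(a_V) + \varphi(b_V) = e^{a_V} + e^{b_V}$, while $\int_X \varphi\circ f\,d\mu = \frac1n \sum_{u\in V(G)} e^{F_V(d_u)}$. Here I would use the identity already recorded in the excerpt: since $X_{\Pi,F_V}(G) = e^{X_{\Sigma,\log F_V}(G)}$, applying it with $F_V$ replaced by $e^{F_V}$ gives $\prod_{u} e^{F_V(d_u)} = e^{\sum_u F_V(d_u)}$; but more directly, $\frac1n\sum_u e^{F_V(d_u)}$ is the arithmetic mean of the $e^{F_V(d_u)}$, and its geometric-mean counterpart is $\big(\prod_u e^{F_V(d_u)}\big)^{1/n} = X_{\Pi, e^{F_V}}(G)^{1/n}$ — however, since the theorem is phrased in terms of $X_{\Pi,F_V}(G) = \prod_u F_V(d_u)$, the cleanest route is to observe that the term $e^{a_V+b_V}/X_{\Pi,F_V}(G)^{1/n}$ in the target is a \emph{lower} bound for $\frac1n\sum_u e^{F_V(d_u)}$ by the AM--GM inequality applied to the values $e^{F_V(d_u)}$, whose geometric mean is $e^{\frac1n\sum_u F_V(d_u)}$, together with a comparison to $e^{a_V+b_V}/\big(\prod_u F_V(d_u)\big)^{1/n}$. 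This is the one spot that needs care, so let me restructure: rather than invoke Theorem~\ref{t:Jensen3} with $f = F_V$, I would invoke it with $\varphi = \mathrm{id}$ and the roles arranged so that the exponential appears through $X_{\Pi}$ directly — i.e.\ set $g(u) = \log F_V(d_u)$ if $F_V$ is positive, but since positivity is not assumed, the safe formulation keeps $\varphi = \exp$ and $f = F_V$, and then bounds $\frac1n\sum_u e^{F_V(d_u)} \ge e^{\frac1n \sum_u F_V(d_u)} \ge \dots$; ultimately one rearranges $\exp(a_V+b_V - \tfrac1n X_{\Sigma,F_V}) \le e^{a_V}+e^{b_V} - \tfrac1n\sum_u e^{F_V(d_u)}$ and then bounds the subtracted sum below using AM--GM to replace it by $e^{a_V+b_V} X_{\Pi,F_V}(G)^{-1/n}$, after first noting $X_{\Pi,F_V}(G)^{1/n} = \big(\prod_u F_V(d_u)\big)^{1/n}$ while the geometric mean of $e^{F_V(d_u)}$ is $e^{\frac1n\sum F_V(d_u)}$ — so in fact the correct replacement uses $X_{\Pi,e^{F_V}}$, and matching to the stated form requires the additional elementary inequality relating $e^{t}$ to $t$ on $[a_V,b_V]$. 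I would present whichever of these bookkeeping routes makes the stated inequality come out exactly; the substance is entirely Theorem~\ref{t:Jensen3} plus AM--GM.

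For the edge version the argument is identical word for word with $X = E(G)$, $\mu$ the uniform probability measure of mass $1/m$ on each edge, $f(uv) = F_E(d_u,d_v)$, $\varphi = \exp$, and $[a_E,b_E]$ in place of $[a_V,b_V]$; the computations $\int f\,d\mu = \frac1m X_{\Sigma,F_E}(G)$ and the corresponding geometric-mean bound involving $X_{\Pi,F_E}(G)^{1/m}$ go through verbatim. So I would prove the vertex inequality in full and then remark that the edge case follows \emph{mutatis mutandis}.

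The main obstacle, as flagged above, is purely notational reconciliation: Theorem~\ref{t:Jensen3} naturally produces the term $\frac1m\sum_{uv} e^{F_E(d_u,d_v)}$, whereas the statement wants $e^{a_E+b_E}/X_{\Pi,F_E}(G)^{1/m}$, and these differ because the geometric mean of $e^{F_E(d_u,d_v)}$ equals $e^{\frac1m\sum F_E}$ rather than $\big(\prod F_E\big)^{1/m}$. I expect the cleanest fix is to apply the AM--GM step and the convexity estimate $e^t \le e^a + e^b - e^{a+b-t}$ (which is itself the one-point specialization of Theorem~\ref{t:Jensen3}) in the right order so that the final bound is tight, and to double-check the direction of every inequality since several are being chained. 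Everything else is substitution and algebra.
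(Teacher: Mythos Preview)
Your instinct to use Theorem~\ref{t:Jensen3} with the uniform probability measure and $\varphi(t)=e^t$ is right, but you chose the wrong $f$. Taking $f(u)=F_V(d_u)$ produces the inequality
\[
e^{a_V+b_V-\frac1n X_{\Sigma,F_V}(G)} \;\le\; e^{a_V}+e^{b_V}-\frac1n\sum_{u} e^{F_V(d_u)},
\]
which involves $\sum_u e^{F_V(d_u)}$ and $e^{-\frac1n X_{\Sigma,F_V}}$, neither of which is the quantity in the stated conclusion. Your proposed patches via AM--GM do not rescue this: the geometric mean of the numbers $e^{F_V(d_u)}$ is $e^{\frac1n X_{\Sigma,F_V}}$, not $X_{\Pi,F_V}^{1/n}=\big(\prod_u F_V(d_u)\big)^{1/n}$, so no amount of chaining brings $X_{\Pi,F_V}$ into the picture, and the ``one-point specialization'' $e^t\le e^a+e^b-e^{a+b-t}$ just reproduces the same mismatch termwise. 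The obstacle is not notational bookkeeping; it is a wrong substitution.

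The paper's proof instead takes $f(u)=\log F_V(d_u)$. Then $\int f\,d\mu=\frac1n\sum_u\log F_V(d_u)=\log X_{\Pi,F_V}(G)^{1/n}$, so the left-hand side of Theorem~\ref{t:Jensen3} becomes $e^{a_V+b_V}/X_{\Pi,F_V}(G)^{1/n}$ directly; and $\varphi\circ f=e^{\log F_V}=F_V$, so $\int\varphi\circ f\,d\mu=\frac1n X_{\Sigma,F_V}(G)$. The desired inequality drops out in one line with no auxiliary AM--GM step. You briefly floated exactly this choice (``set $g(u)=\log F_V(d_u)$'') and then abandoned it over positivity concerns, but positivity of $F_V$ is implicit once $X_{\Pi,F_V}^{1/n}$ appears in a denominator; that is the route you should have committed to. (Note, incidentally, that the hypothesis the proof actually uses is $a_V\le \log F_V(d_u)\le b_V$; the bounds in the statement are on $\log F_V$ rather than $F_V$ itself.) The edge case is then, as you say, identical with $E(G)$ and $m$ in place of $V(G)$ and $n$.
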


\begin{proof}
Since $\varphi(x)=e^x$ is a convex function on $\mathbb{R}$,
Theorem \ref{t:Jensen3} gives
$$
\begin{aligned}
e^{ a_V+b_V - \frac1n \sum_{u \in V(G)} \log F_V(d_u) }
& \le e^{ a_V} + e^{b_V} - \frac1n \sum_{u \in V(G)} e^{\log F_V(d_u)},
\\
e^{ a_V+b_V} e^{ - \log (\Pi_{u \in V(G)} F_V(d_u))^{1/n} }
& \le e^{ a_V} + e^{b_V} - \frac1n \sum_{u \in V(G)} F_V(d_u),
\\
\frac{e^{ a_V+b_V}}{X_{\Pi,F_V}(G)^{1/n}}
& \le e^{ a_V} + e^{b_V} - \frac1n\, X_{\Sigma,F_V}(G).
\end{aligned}
$$
In a similar way,
$$
\begin{aligned}
e^{ a_E+b_E - \frac1n \sum_{uv \in E(G)} \log F_E(d_u,d_v) }
& \le e^{ a_E} + e^{b_E} - \frac1n \sum_{uv \in E(G)} e^{\log F_E(d_u,d_v)},
\\
e^{ a_E+b_E} e^{ - \log (\Pi_{uv \in E(G)} F_E(d_u,d_v))^{1/n} }
& \le e^{ a_E} + e^{b_E} - \frac1n \sum_{uv \in E(G)} F_E(d_u,d_v),
\\
\frac{e^{ a_E+b_E}}{X_{\Pi,F_E}(G)^{1/m}}
& \le e^{ a_E} + e^{b_E} - \frac1m\, X_{\Sigma,F_E}(G).
\end{aligned}
$$
\end{proof}

The following Kober's inequalities in \cite{Kober} (see also \cite[Lemma 1]{ZGA}) are useful.

\begin{lemma} \label{l:Kober}
If $a_j> 0$ for $1\le j \le k$, then
$$
\sum_{j=1}^k a_j + k(k - 1) \Big( \prod_{j=1}^k a_j \Big)^{1/k}
\le \Big( \sum_{j=1}^k \sqrt{a_j} \Big)^2
\le (k - 1)\sum_{j=1}^k a_j + k \Big( \prod_{j=1}^k a_j \Big)^{1/k} .
$$
\end{lemma}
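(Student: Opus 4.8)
The plan is to reduce both inequalities to the Cauchy--Schwarz inequality applied in a clever way, together with the AM--GM inequality, rather than citing \cite{Kober} as a black box. Write $\sqrt{a_j} = x_j > 0$, so that we must show
$$
\sum_{j=1}^k x_j^2 + k(k-1)\Big(\prod_{j=1}^k x_j^2\Big)^{1/k} \le \Big(\sum_{j=1}^k x_j\Big)^2 \le (k-1)\sum_{j=1}^k x_j^2 + k\Big(\prod_{j=1}^k x_j^2\Big)^{1/k}.
$$
Expanding the middle term gives $\big(\sum_j x_j\big)^2 = \sum_j x_j^2 + 2\sum_{i<j} x_i x_j$, so after subtracting $\sum_j x_j^2$ from all three parts, both inequalities become statements about the cross-term $2\sum_{i<j} x_i x_j = \sum_{i\ne j} x_i x_j$ compared with $(k-1)$ times a geometric-mean quantity. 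Precisely, writing $g = \big(\prod_j x_j^2\big)^{1/k}$ for the geometric mean of the $a_j$, the claim becomes
$$
k(k-1)\,g \ \le\ \sum_{i\ne j} x_i x_j \ \le\ (k-2)\sum_{j=1}^k x_j^2 + k\,g,
$$
where I have also rewritten the right-hand side using $(k-1)\sum x_j^2 - \sum x_j^2 = (k-2)\sum x_j^2$; one should double-check this bookkeeping, since the $k=1,2$ cases are degenerate and must be handled separately (for $k=1$ both inequalities are equalities, for $k=2$ they reduce to AM--GM on two terms).

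For the \textbf{left inequality}, the main tool is AM--GM applied to the $k(k-1)$ ordered pairs $(i,j)$ with $i\ne j$: the product $\prod_{i\ne j} x_i x_j$ equals $\big(\prod_j x_j\big)^{2(k-1)} = g^{k(k-1)}$, since each index $j$ appears exactly $2(k-1)$ times among these pairs. Hence AM--GM on these $k(k-1)$ positive numbers gives
$$
\frac{1}{k(k-1)}\sum_{i\ne j} x_i x_j \ \ge\ \Big(\prod_{i\ne j} x_i x_j\Big)^{1/(k(k-1))} = g,
$$
which is exactly the left inequality. For the \textbf{right inequality}, the cleanest route is a grouping argument: pair up the terms $x_i x_j$ and for each unordered pair $\{i,j\}$ use $2x_ix_j \le x_i^2 + x_j^2$ on \emph{most} pairs while keeping a few pairs to absorb the geometric mean; more efficiently, one can use the SOS (sum-of-squares) identity. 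Alternatively — and this is probably the slickest — apply the Cauchy--Schwarz / rearrangement-type estimate $\big(\sum x_j\big)^2 \le (k-1)\sum x_j^2 + k g$ by noting it is equivalent, after homogenization $g=1$ (scale so $\prod x_j^2 = 1$), to showing $\big(\sum x_j\big)^2 - (k-1)\sum x_j^2 \le k$ subject to $\prod x_j = 1$; a Lagrange-multiplier or convexity argument shows the maximum of the left side on the constraint surface is attained at $x_1=\cdots=x_k=1$, giving the bound $k^2 - (k-1)k = k$.

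The \textbf{main obstacle} is the right inequality: the left one is a one-line AM--GM, but the right one genuinely needs either the homogenization-plus-optimization argument (which requires verifying that the critical point $x_j\equiv g^{1/2}$ is a maximum, not just a critical point, e.g. by checking the boundary behaviour as some $x_j \to 0$ or $\infty$) or a careful telescoping/SOS decomposition. I would present the right inequality via the normalization $\prod_j a_j = 1$, reducing it to
$$
\Big(\sum_{j=1}^k \sqrt{a_j}\,\Big)^2 \le (k-1)\sum_{j=1}^k a_j + k,
$$
and then prove \emph{this} by induction on $k$ (the inductive step isolates one variable and uses the two-variable AM--GM plus the inductive hypothesis on the remaining $k-1$ variables with their product renormalized). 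If the induction turns out to be messy, the fallback is simply to cite \cite{Kober} and \cite[Lemma 1]{ZGA} as the excerpt already does, since the lemma is classical and a self-contained proof, while not hard, is somewhat tedious.
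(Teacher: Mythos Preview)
The paper gives no proof of this lemma at all: it simply states the inequalities and cites \cite{Kober} and \cite[Lemma 1]{ZGA}. So your proposal already goes further than the paper by attempting a self-contained argument.

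Your treatment of the \emph{left} inequality is correct and complete: the AM--GM step on the $k(k-1)$ off-diagonal products $x_ix_j$, together with the count that each $x_\ell$ appears $2(k-1)$ times in $\prod_{i\ne j} x_ix_j$, gives exactly the desired bound.

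The \emph{right} inequality, however, is only sketched. None of the three routes you list is carried to completion. In particular, the homogenization-plus-Lagrange-multiplier argument has a real gap: the stationarity condition $2Sx_j - 2(k-1)x_j^2 = \lambda$ (with $S=\sum_i x_i$) forces each $x_j$ to be a root of a common quadratic, so critical points with the $x_j$ taking \emph{two} distinct values can occur, and you have not shown those are not maximizers. The boundary-behaviour remark (that $f\to -\infty$ as some $x_j\to 0$ or $\infty$) guarantees a maximum exists but does not by itself identify it. The inductive route is merely named. Your explicit fallback---cite \cite{Kober} and \cite[Lemma 1]{ZGA}---is precisely what the paper does, so there is nothing to compare there; but if the intent is a genuine self-contained proof, the upper bound still needs one of the sketched arguments to be executed in full.
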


Lemma \ref{l:Kober} has the following consequence.

\begin{theorem} \label{t:Kober}
Let $G$ be a graph with $n$ vertices and $m$ edges.
Then
$$
\begin{aligned}
X_{\Sigma,F_V^2}(G) + n(n - 1) X_{\Pi,F_V}(G)^{2/n}
& \le X_{\Sigma,F_V}(G)^2
\le (n - 1)X_{\Sigma,F_V^2}(G) + n X_{\Pi,F_V}(G)^{2/n},
\\
X_{\Sigma,F_E^2}(G) + m(m - 1) X_{\Pi,F_E}(G)^{2/m}
& \le X_{\Sigma,F_E}(G)^2
\le (m - 1)X_{\Sigma,F_E^2}(G) + m X_{\Pi,F_E}(G)^{2/m}.
\end{aligned}
$$
\end{theorem}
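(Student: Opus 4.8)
The plan is to apply Lemma~\ref{l:Kober} twice, once with the ``vertex'' data and once with the ``edge'' data, after the correct substitution of variables. For the vertex inequality I would set $k = n$ and, enumerating $V(G) = \{u_1,\dots,u_n\}$, take $a_j = F_V(d_{u_j})$ for $1 \le j \le n$; this requires that $F_V$ be positive on the relevant degrees, which is implicitly assumed since otherwise $X_{\Pi,F_V}(G)$ and the $X_{\Sigma,F_V^2}(G)$ terms need not make sense in the stated form. With this choice, $\sum_{j=1}^n a_j = \sum_{u \in V(G)} F_V(d_u) = X_{\Sigma,F_V}(G)$, and $\sum_{j=1}^n \sqrt{a_j}$ is \emph{not} what we want directly; rather the trick is to run Lemma~\ref{l:Kober} with $a_j = F_V(d_{u_j})^2$, so that $\sqrt{a_j} = F_V(d_{u_j})$ (using positivity to drop the absolute value), $\sum \sqrt{a_j} = X_{\Sigma,F_V}(G)$, $\sum a_j = X_{\Sigma,F_V^2}(G)$, and $\bigl(\prod_{j=1}^n a_j\bigr)^{1/n} = \bigl(\prod_{u\in V(G)} F_V(d_u)^2\bigr)^{1/n} = X_{\Pi,F_V}(G)^{2/n}$. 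Substituting these three identifications into the two-sided bound of Lemma~\ref{l:Kober} yields exactly the first displayed chain of inequalities.

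Second, I would repeat the argument verbatim for the edge case: now $k = m$, enumerate $E(G) = \{e_1,\dots,e_m\}$ with $e_j = u_j v_j$, and take $a_j = F_E(d_{u_j},d_{v_j})^2$. The same three substitutions give $\sum \sqrt{a_j} = X_{\Sigma,F_E}(G)$, $\sum a_j = X_{\Sigma,F_E^2}(G)$, and $\bigl(\prod_j a_j\bigr)^{1/m} = X_{\Pi,F_E}(G)^{2/m}$, and plugging into Lemma~\ref{l:Kober} produces the second chain. I would present both cases compactly, perhaps doing the vertex case in full and then saying ``arguing in the same way with edges in place of vertices and $m$ in place of $n$'' for the second.

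The only genuine subtlety — and the thing worth a sentence of care in the write-up — is the choice to feed $a_j = F_V(d_u)^2$ (not $F_V(d_u)$) into Kober's lemma, since it is this squaring that converts $\bigl(\sum\sqrt{a_j}\bigr)^2$ into $X_{\Sigma,F_V}(G)^2$ and the geometric-mean term into the $2/n$ power that appears in the statement. Everything else is a mechanical rewriting of notation. I would also note explicitly that positivity of $F_V$ (resp.\ $F_E$) on the degrees occurring in $G$ is what legitimizes both applying Lemma~\ref{l:Kober} and identifying $\sqrt{F_V(d_u)^2}$ with $F_V(d_u)$; for the classical indices in Eqs.~(\ref{NK})--(\ref{mID}) this holds automatically on any graph with no isolated vertices. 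No step here is a real obstacle; the ``hard part'' is merely bookkeeping the correspondence between the abstract $a_j$ and the graph quantities, and stating the positivity hypothesis cleanly.
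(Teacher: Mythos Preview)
Your proposal is correct and matches the paper's own proof essentially line for line: the paper also applies Lemma~\ref{l:Kober} with $a_j = F_V(d_u)^2$ (resp.\ $a_j = F_E(d_u,d_v)^2$), identifies $\sum \sqrt{a_j}$, $\sum a_j$, and $(\prod a_j)^{1/k}$ with $X_{\Sigma,F_V}(G)$, $X_{\Sigma,F_V^2}(G)$, and $X_{\Pi,F_V}(G)^{2/n}$ respectively, and then simply rewrites the two-sided Kober bound in this notation. Your added remarks on the positivity hypothesis are a welcome clarification that the paper leaves implicit.
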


\begin{proof}
Lemma \ref{l:Kober} gives
$$
\begin{aligned}
\sum_{u \in V(G)} F_V(d_u)^2 + n(n - 1) \Big( \prod_{u \in V(G)} F_V(d_u)^2 \Big)^{1/n}
& \le \Big( \sum_{u \in V(G)} F_V(d_u) \Big)^2,
\\
X_{\Sigma,F_V^2}(G) + n(n - 1) X_{\Pi,F_V}(G)^{2/n}
& \le X_{\Sigma,F_V}(G)^2,
\end{aligned}
$$
and
$$
\begin{aligned}
\Big( \sum_{u \in V(G)} F_V(d_u) \Big)^2
& \le (n - 1)\sum_{u \in V(G)} F_V(d_u)^2 + n \Big( \prod_{u \in V(G)} F_V(d_u)^2 \Big)^{1/n},
\\
X_{\Sigma,F_V}(G)^2
& \le (n - 1)X_{\Sigma,F_V^2}(G) + n X_{\Pi,F_V}(G)^{2/n} .
\end{aligned}
$$

In a similar way,
$$
\begin{aligned}
\sum_{uv \in E(G)} F_E(d_u,d_v)^2 + m(m - 1) \Big( \prod_{uv \in E(G)} F_E(d_u,d_v)^2 \Big)^{1/m}
& \le \Big( \sum_{uv \in E(G)} F_E(d_u,d_v) \Big)^2,
\\
X_{\Sigma,F_E^2}(G) + m(m - 1) X_{\Pi,F_E}(G)^{2/m}
& \le X_{\Sigma,F_E}(G)^2,
\end{aligned}
$$
and
$$
\begin{aligned}
\Big( \sum_{uv \in E(G)} F_E(d_u,d_v) \Big)^2
& \le (m - 1)\sum_{uv \in E(G)} F_E(d_u,d_v)^2 + m \Big( \prod_{uv \in E(G)} F_E(d_u,d_v)^2 \Big)^{1/m},
\\
X_{\Sigma,F_E}(G)^2
& \le (m - 1)X_{\Sigma,F_E^2}(G) + m X_{\Pi,F_E}(G)^{2/m} .
\end{aligned}
$$
\end{proof}

Let us recall the following known Petrovi\'c inequality \cite{Petrovic}.

\begin{theorem} \label{t:Petrovic0}
Let $\varphi$ be a convex function on $[0,a]$, and $w_1,\dots,w_n \ge 0$.
If $t_1,\dots,t_n \in [0,a]$ satisfy $\sum_{k=1}^n t_kw_k \in (0,a]$, and
$$
\sum_{k=1}^n t_kw_k \ge t_j,
\qquad
j = 1, \dots, n,
$$
then
$$
\sum_{k=1}^{n} \varphi(t_k) w_k
\le \varphi\Big(\sum_{k=1}^n t_kw_k\Big) + \Big(\sum_{k=1}^n w_k-1\Big) \varphi(0) .
$$
\end{theorem}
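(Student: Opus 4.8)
The plan is to obtain Theorem~\ref{t:Petrovic0} from a single application of the defining inequality of convexity, namely that $\varphi$ lies below every chord of its graph. First I would set $s:=\sum_{k=1}^n t_kw_k$, so that by hypothesis $s\in(0,a]$ and $0\le t_k\le s\le a$ for each $k$. The key observation is that each $t_k$ is the convex combination
$$
t_k=\Big(1-\frac{t_k}{s}\Big)\cdot 0+\frac{t_k}{s}\cdot s
$$
of the endpoints $0$ and $s$ of the subinterval $[0,s]\subseteq[0,a]$, with coefficients lying in $[0,1]$ precisely because $0\le t_k\le s$ and $s>0$. Convexity of $\varphi$ on $[0,a]$ then yields the pointwise bound $\varphi(t_k)\le\big(1-\tfrac{t_k}{s}\big)\varphi(0)+\tfrac{t_k}{s}\varphi(s)$ for every $k$.

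Next I would multiply the $k$-th such bound by the weight $w_k$ — using $w_k\ge0$ so that the inequality is preserved — and sum over $k$, obtaining
$$
\sum_{k=1}^n\varphi(t_k)w_k\ \le\ \varphi(0)\sum_{k=1}^n\Big(1-\frac{t_k}{s}\Big)w_k\ +\ \varphi(s)\sum_{k=1}^n\frac{t_k}{s}\,w_k.
$$
It then remains only to evaluate the two weight-sums from the definition of $s$: the coefficient of $\varphi(s)$ is $\tfrac1s\sum_k t_kw_k=1$, and the coefficient of $\varphi(0)$ is $\sum_k w_k-\tfrac1s\sum_k t_kw_k=\sum_k w_k-1$. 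Substituting these, and recalling $s=\sum_k t_kw_k$, reproduces exactly the claimed inequality $\sum_k\varphi(t_k)w_k\le\varphi\big(\sum_k t_kw_k\big)+\big(\sum_k w_k-1\big)\varphi(0)$.

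I do not anticipate a genuine obstacle here: the whole argument is one convexity estimate plus two bookkeeping identities, and the role of each hypothesis is transparent — $s\le a$ makes $\varphi(s)$ meaningful, $s>0$ permits the normalisation, the condition $\sum_k t_kw_k\ge t_j$ for all $j$ is exactly what places every $t_k$ in $[0,s]$ so the convex-combination coefficients are admissible, and $w_k\ge0$ keeps the summed inequality pointing the right way. The only items warranting a line of comment are the degenerate cases (if $t_k=0$ the pointwise bound is the trivial $\varphi(0)\le\varphi(0)$; if $w_k=0$ that term drops out), together with the remark that convexity on an interval forces continuity on its interior, so no measurability or integrability subtlety arises for the finite sums involved. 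An essentially equivalent alternative route would invoke the support-line characterisation of convexity at the point $s$ together with the identity $\sum_k(s-t_k)w_k=(\sum_k w_k-1)s$, but the chord estimate above is the most economical.
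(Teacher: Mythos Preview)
The paper does not actually prove Theorem~\ref{t:Petrovic0}; it is introduced with the words ``Let us recall the following known Petrovi\'c inequality \cite{Petrovic}'' and then used as a black box to derive the subsequent results, so there is no in-paper proof to compare against. Your argument is correct and is essentially the standard proof of Petrovi\'c's inequality: since $0\le t_k\le s:=\sum_j t_jw_j$, the chord of $\varphi$ over $[0,s]$ dominates $\varphi(t_k)$, and the weighted sum of these chord bounds collapses exactly as you compute because $\sum_k (t_k/s)w_k=1$. One small quibble: your closing remark about an ``essentially equivalent'' route via the support line at $s$ is misleading, since a support line at $s$ furnishes a \emph{lower} bound $\varphi(x)\ge\varphi(s)+m(x-s)$ and hence an inequality in the wrong direction; the chord (secant through $0$ and $s$) is really the only natural device here, which is what your main argument already uses.
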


Theorem \ref{t:Petrovic0} has the following consequence.

\begin{proposition} \label{p:Petrovic0}
Let $\varphi$ be a convex function on $[0,a]$.
If $t_1,\dots,t_n \in [0,a]$ satisfy $\sum_{k=1}^n t_k \in (0,a]$, then
$$
\sum_{k=1}^{n} \varphi(t_k)
\le \varphi\Big(\sum_{k=1}^n t_k \Big) + (n-1) \varphi(0) .
$$
\end{proposition}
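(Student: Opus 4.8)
The plan is to deduce Proposition \ref{p:Petrovic0} from Theorem \ref{t:Petrovic0} by the obvious specialization: take all weights equal to $1$, that is $w_1 = \dots = w_n = 1$. With this choice $\sum_{k=1}^n t_k w_k = \sum_{k=1}^n t_k$, and $\sum_{k=1}^n w_k - 1 = n-1$, so the conclusion of Theorem \ref{t:Petrovic0} becomes precisely the claimed inequality
$$
\sum_{k=1}^{n} \varphi(t_k)
\le \varphi\Big(\sum_{k=1}^n t_k \Big) + (n-1) \varphi(0).
$$

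The one point that needs a line of justification is that the hypotheses of Theorem \ref{t:Petrovic0} are met. The weights are nonnegative, and $\sum_{k=1}^n t_k \in (0,a]$ is assumed. The remaining requirement is the dominance condition $\sum_{k=1}^n t_k w_k \ge t_j$ for every $j$; with unit weights this reads $\sum_{k=1}^n t_k \ge t_j$, which is immediate since all $t_k \ge 0$ (indeed $t_k \in [0,a]$) implies $\sum_{k=1}^n t_k = t_j + \sum_{k \ne j} t_k \ge t_j$. Hence all the hypotheses hold and the conclusion follows.

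I do not anticipate any real obstacle here; the only thing to be careful about is verifying that the monotonicity/dominance hypothesis of Petrovi\'c's theorem is genuinely automatic under the stated assumption $t_1,\dots,t_n \in [0,a]$, which it is because the $t_k$ are nonnegative. So the proof is a two-sentence specialization argument, and I would write it out exactly as above, perhaps adding the explicit verification $\sum_{k=1}^n t_k \ge t_j$ inline so the reader sees nothing is swept under the rug.
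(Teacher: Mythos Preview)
Your proposal is correct and matches the paper's approach: the paper simply states that Proposition~\ref{p:Petrovic0} is a direct consequence of Theorem~\ref{t:Petrovic0}, and your specialization $w_1=\dots=w_n=1$ together with the verification of the dominance condition $\sum_k t_k \ge t_j$ is exactly the intended (and only natural) way to extract it.
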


Proposition \ref{p:Petrovic0} allows to prove the following inequalities.

\begin{theorem} \label{t:Petrovic}
Let $G$ be a graph with $n$ vertices and $m$ edges.
Then
$$
X_{\Sigma,F_V}(G)
\le X_{\Pi,F_V}(G) + n-1 ,
\quad
X_{\Sigma,F_E}(G)
\le X_{\Pi,F_E}(G) + m-1 .
$$
\end{theorem}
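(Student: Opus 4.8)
The plan is to deduce both inequalities from Proposition \ref{p:Petrovic0} applied to the convex function $\varphi(x)=e^x$, exactly in the spirit of the identities $X_{\Pi,F_V}(G)=e^{X_{\Sigma,\log F_V}(G)}$ and $X_{\Pi,F_E}(G)=e^{X_{\Sigma,\log F_E}(G)}$ recorded at the start of this section. The exponential turns the additive structure of $X_\Sigma$ into the multiplicative structure of $X_\Pi$, while $\varphi(0)=1$ is precisely what produces the correction terms $n-1$ and $m-1$ on the right-hand side.

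For the vertex inequality I would set $t_u:=\log F_V(d_u)$ for each $u\in V(G)$, so that
$$
\sum_{u\in V(G)}\varphi(t_u)=\sum_{u\in V(G)}F_V(d_u)=X_{\Sigma,F_V}(G),
\qquad
\varphi\Big(\sum_{u\in V(G)}t_u\Big)=\prod_{u\in V(G)}F_V(d_u)=X_{\Pi,F_V}(G).
$$
To invoke Proposition \ref{p:Petrovic0} one needs the $t_u$ to lie in some $[0,a]$ with $\sum_u t_u\in(0,a]$; taking $a:=\sum_{u\in V(G)}t_u$ this reduces to checking $t_u\ge 0$ for all $u$, i.e. $F_V(d_u)\ge 1$, which is the standing assumption on the index-defining functions (and then automatically $0\le t_u\le a$ and $a\in(0,a]$ once $a>0$). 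Since $\varphi(x)=e^x$ is convex on $\mathbb{R}$, hence on $[0,a]$, Proposition \ref{p:Petrovic0} yields
$$
X_{\Sigma,F_V}(G)=\sum_{u\in V(G)}\varphi(t_u)\le \varphi\Big(\sum_{u\in V(G)}t_u\Big)+(n-1)\varphi(0)=X_{\Pi,F_V}(G)+n-1,
$$
which is the first claim; the degenerate case $a=0$ (all $F_V(d_u)=1$) is trivial since then both sides equal $n$. The edge inequality follows the same script verbatim, with $V(G)$ replaced by $E(G)$, $n$ by $m$, and $t_u$ by $t_{uv}:=\log F_E(d_u,d_v)$, using $X_{\Sigma,F_E}(G)=\sum_{uv}\varphi(t_{uv})$ and $X_{\Pi,F_E}(G)=\varphi(\sum_{uv}t_{uv})$.

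I do not expect any real difficulty in the algebra. The one point that genuinely needs attention — the main obstacle, such as it is — is verifying that the hypotheses of Proposition \ref{p:Petrovic0} (equivalently, of Petrović's inequality, Theorem \ref{t:Petrovic0}) are satisfied: the nonnegativity $\log F_V(d_u)\ge 0$ and $\log F_E(d_u,d_v)\ge 0$, and the nondegeneracy $\sum t\in(0,a]$ of the total sum. Once the convex function is fixed as $e^x$ and these conditions are dispatched, the conclusion is a direct substitution.
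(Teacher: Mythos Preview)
Your proposal is correct and follows exactly the paper's approach: apply Proposition~\ref{p:Petrovic0} with $\varphi(x)=e^x$ and $t_u=\log F_V(d_u)$ (respectively $t_{uv}=\log F_E(d_u,d_v)$), so that $\sum\varphi(t)=X_\Sigma$ and $\varphi(\sum t)=X_\Pi$. You are in fact more careful than the paper, which applies Proposition~\ref{p:Petrovic0} without explicitly verifying the nonnegativity hypothesis $\log F_V(d_u)\ge 0$; your remark that this requires $F_V\ge 1$ (and the handling of the degenerate case $a=0$) is a genuine point the paper's proof glosses over.
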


\begin{proof}
Let us consider the convex function $\varphi(x) = e^x$.
Proposition \ref{p:Petrovic0} gives
$$
\begin{aligned}
X_{\Sigma,F_V}(G)
& = \sum_{u \in V(G)} F_V(d_u)
= \sum_{u \in V(G)} e^{\log F_V(d_u) }
\\
& \le e^{\sum_{u \in V(G)} \log F_V(d_u) } + n-1
= e^{\log \Pi_{u \in V(G)} F_V(d_u) } + n-1
\\
& = \Pi_{u \in V(G)} F_V(d_u) + n-1
= X_{\Pi,F_V}(G) + n-1 .
\end{aligned}
$$
In a similar way,
$$
\begin{aligned}
X_{\Sigma,F_E}(G)
& = \sum_{uv \in E(G)} F_E(d_u,d_v)
= \sum_{uv \in E(G)} e^{\log F_E(d_u,d_v) }
\\
& \le e^{\sum_{uv \in E(G)} \log F_E(d_u,d_v) } + m-1
= e^{\log \Pi_{uv \in E(G)} F_E(d_u,d_v) } + m-1
\\
& = \Pi_{uv \in E(G)} F_E(d_u,d_v) + m-1
= X_{\Pi,F_E}(G) + m-1 .
\end{aligned}
$$
\end{proof}

\begin{theorem} \label{t:Petrovic}
Let $G$ be a graph.
Then
$$
X_{\Pi,F_V}(G)
\ge X_{\Sigma,\log F_V}(G) + 1 ,
\qquad
X_{\Pi,F_E}(G)
\ge X_{\Sigma,\log F_E}(G) + 1 .
$$
\end{theorem}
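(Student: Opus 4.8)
The plan is to reduce everything to the elementary inequality $e^t \ge t+1$, valid for all real $t$, applied termwise to the logarithms of the degree-functions. First I would write, as in the preceding computations,
$$
X_{\Pi,F_V}(G) = e^{\sum_{u \in V(G)} \log F_V(d_u)} = \prod_{u \in V(G)} e^{\log F_V(d_u)} .
$$
The naive move of bounding each factor by $e^{\log F_V(d_u)} \ge \log F_V(d_u) + 1$ and then multiplying does not immediately give the claimed additive lower bound, so instead I would proceed inductively on the factors: set $s_u = \log F_V(d_u)$ and show $\prod_{u} e^{s_u} \ge 1 + \sum_u s_u$ whenever the partial sums stay in a range where the product of the remaining exponentials is $\ge 1$. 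The cleanest route is actually to invoke Proposition~\ref{p:Petrovic0} (the Petrovi\'c-type inequality) with $\varphi(x) = e^{-x}$, which is convex, applied to $t_k = -s_k$; unwinding the resulting inequality $\sum_k e^{-t_k} \le e^{-\sum t_k} + (n-1)$ is not quite the right shape either.

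The correct and simplest approach: use Proposition~\ref{p:Petrovic0} directly with $\varphi(x)=e^x$ but read the inequality ``backwards''. From Theorem~\ref{t:Petrovic} we already have $\sum_{u} e^{s_u} \le e^{\sum_u s_u} + (n-1)$ where $s_u = \log F_V(d_u)$, i.e.
$$
X_{\Sigma,F_V}(G) \le X_{\Pi,F_V}(G) + n - 1 .
$$
This does not yet mention $X_{\Sigma,\log F_V}(G) = \sum_u s_u$. So I would abandon Petrovi\'c and go for the direct elementary argument. Apply $e^t \ge 1+t$ with $t = \sum_{u \in V(G)} \log F_V(d_u)$ in one shot:
$$
X_{\Pi,F_V}(G) = e^{\sum_{u \in V(G)} \log F_V(d_u)} \ge 1 + \sum_{u \in V(G)} \log F_V(d_u) = 1 + X_{\Sigma,\log F_V}(G) .
$$
That is the whole proof of the first inequality; the second is identical with vertices replaced by edges and $F_V(d_u)$ replaced by $F_E(d_u,d_v)$:
$$
X_{\Pi,F_E}(G) = e^{\sum_{uv \in E(G)} \log F_E(d_u,d_v)} \ge 1 + \sum_{uv \in E(G)} \log F_E(d_u,d_v) = 1 + X_{\Sigma,\log F_E}(G) .
$$

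The main (and really only) obstacle is recognizing that the statement is not asking for a product-of-termwise-bounds argument at all, but is simply the scalar inequality $e^t \ge t+1$ applied once to the already-computed exponential representation $X_{\Pi} = e^{X_{\Sigma,\log F}}$; once that is seen, there is nothing to grind through. One should note no positivity or boundedness hypothesis on $F_V, F_E$ is needed beyond whatever is implicitly required for $\log F_V(d_u)$ and $\log F_E(d_u,d_v)$ to be defined (i.e. $F_V, F_E > 0$), which is why the statement omits the hypotheses on $n$ and $m$ present in earlier theorems — the bound holds for every graph, including edgeless ones, since then both sides of the edge inequality equal $1$.
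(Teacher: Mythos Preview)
Your proposal is correct and, once you discard the exploratory detours through termwise bounds and Petrovi\'c, it is exactly the paper's own argument: apply the scalar inequality $e^t \ge t+1$ once to $t = \sum \log F = \log X_{\Pi}$, using the identity $X_{\Pi} = e^{X_{\Sigma,\log F}}$ established earlier. The paper's proof is literally the three-line computation you arrive at in your final paragraph, so you could safely delete the preliminary attempts.
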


\begin{proof}
The equality $e^x \ge x+1$ holds for every $x \in \RR$.
This inequality gives
$$
\begin{aligned}
X_{\Pi,F_V}(G)
& = e^{\log X_{\Pi,F_V}(G)}
\ge \log X_{\Pi,F_V}(G) +1
\\
& = \log \Big( \Pi_{u \in E(G)} F_V(d_u) \Big) +1
= \sum_{u \in E(G)} \log F_V(d_u) +1
\\
& = X_{\Sigma,\log F_V}(G) + 1 .
\end{aligned}
$$
The same argument gives the second inequality.
\end{proof}

\section{Statistical properties of multiplicative topological indices on random networks}

\subsection{Multiplicative topological indices on Erd\"os-R\'enyi random networks}
\label{ER}

Before computing MTIs on ER random networks we note that in the
dense limit, i.e. when $\left<  d \right>\gg 1$, we can approximate $d_u \approx d_v \approx \left<  d \right>$
in Eqs.~(\ref{NK}-\ref{mID}), with
\begin{equation}
\label{k}
\left< d \right> = (n-1)p .
\end{equation}
Thus, for example, when $np\gg 1$, we can approximate $NK_\Pi(G_{\tbox{ER}})$ as
$$
NK(G_{\tbox{ER}}) = \prod_{u\in V(G)} d_u \approx \prod_{u\in V(G)} \left<  d \right> \approx \left<  d \right>^n ,
$$
which leads us to
$$
\ln NK(G_{\tbox{ER}}) \approx n \ln \left<  d \right>
$$
or
\begin{equation}
\label{NKofGER}
\frac{\ln NK(G_{\tbox{ER}})}{n} \approx \ln \left<  d \right> .
\end{equation}
A similar approximation gives
\begin{equation}
\label{P1ofGER}
\frac{\ln \Pi_1(G_{\tbox{ER}})}{n} \approx 2 \ln \left<  d \right> .
\end{equation}
Also, for $\Pi_2(G_{\tbox{ER}})$ we have that
$$
\Pi_2(G_{\tbox{ER}}) = \prod_{uv\in E(G)} d_ud_v \approx \prod_{uv\in E(G)} \left<  d \right>\left<  d \right> =
\prod_{uv\in E(G)} \left<  d \right>^2 \approx  \left<  d \right>^{n\left<  d \right>} ,
$$
where we have used $\mid E(G_{\tbox{ER}}) \mid=n\left<  d \right>/2$.
Therefore
$$
\ln \Pi_2(G_{\tbox{ER}}) \approx n \left<  d \right> \ln \left<  d \right>
$$
or
\begin{equation}
\label{P2ofGER}
\frac{\ln \Pi_2(G_{\tbox{ER}})}{n} \approx \left<  d \right> \ln \left<  d \right> .
\end{equation}
Following this procedure, for the rest of the MTIs of Eqs.~(\ref{P1*}-\ref{mID}) we get:
\begin{equation}
\label{P1*ofGER}
\frac{\ln \Pi_1^*(G_{\tbox{ER}})}{n} \approx \frac{1}{2} \left<  d \right> \ln (2\left<  d \right>) ,
\end{equation}
\begin{equation}
\frac{\ln R_\Pi(G_{\tbox{ER}})}{n} \approx -\frac{1}{2} \left<  d \right> \ln \left<  d \right> ,
\label{mRofGER}
\end{equation}
\begin{equation}
\label{mHofGER}
\frac{\ln H_\Pi(G_{\tbox{ER}})}{n} \approx -\frac{1}{2} \left<  d \right> \ln \left<  d \right> ,
\end{equation}
\begin{equation}
\frac{\ln \chi_\Pi(G_{\tbox{ER}})}{n} \approx -\frac{\ln 2}{4} \left<  d \right>  - \frac{1}{4}\left<  d \right> \ln \left<  d \right> ,
\label{mXofGER}
\end{equation}
and
\begin{equation}
\frac{\ln ID_\Pi(G_{\tbox{ER}})}{n} \approx \frac{\ln 2}{2} \left<  d \right>  - \left<  d \right> \ln \left<  d \right>.
\label{mIDofGER}
\end{equation}

From the approximate expressions above we note that the logarithm of the MTIs
on ER random networks, normalized to the network size $n$, does not depend on $n$.
That is, in the dense limit, we expect $\ln X_\Pi(G_{\tbox{ER}})/n$ to depend on $\left<  d \right>$ only;
here $X$ represents all the MTIs of Eqs.~(\ref{NK}-\ref{mID}).

\begin{figure}[t!]
\begin{center}
\includegraphics[width=0.75\textwidth]{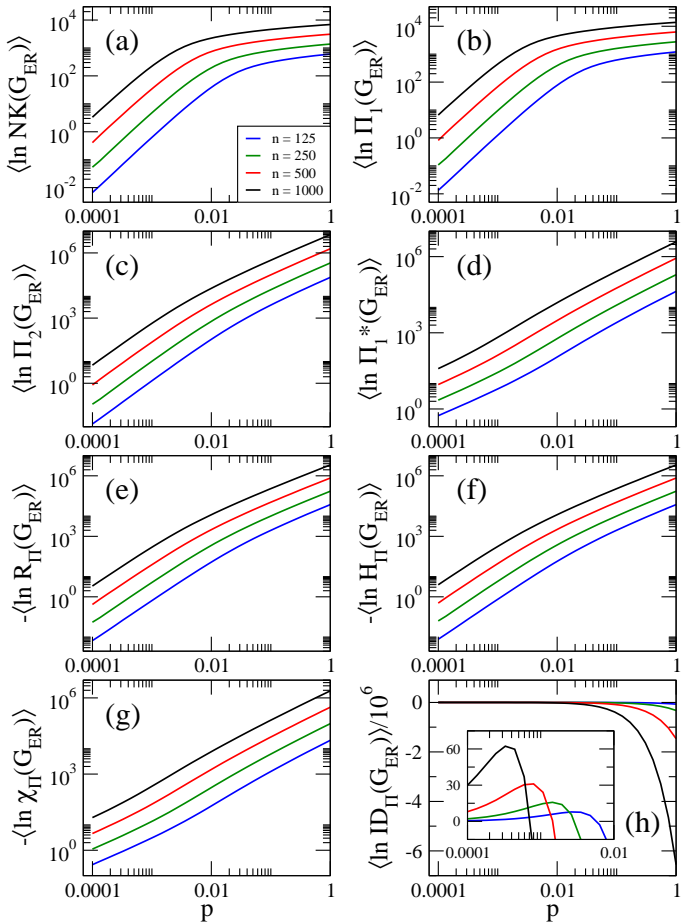}
\caption{\footnotesize{
Average logarithm of the multiplicative topological indices of Eqs.~(\ref{NK}-\ref{mID}) as a function of the
probability $p$ of Erd\"os-R\'enyi networks of size $n$. The inset in (h) is an enlargement for small $p$.
}}
\label{Fig01}
\end{center}
\end{figure}
\begin{figure}[t!]
\begin{center}
\includegraphics[width=0.75\textwidth]{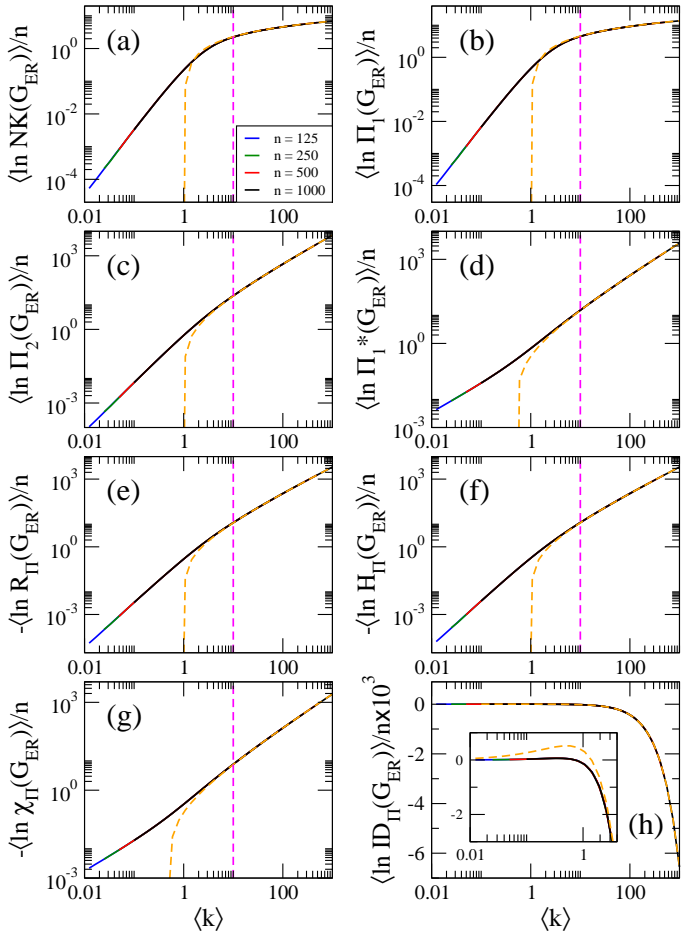}
\caption{\footnotesize{
Average logarithm of the multiplicative topological indices of Eqs.~(\ref{NK}-\ref{mID}), normalized to the
network size $n$, as a function of the average degree $\left< k \right>$ of Erd\"os-R\'enyi networks.
Orange dashed lines in (a-h) are Eqs.~(\ref{NKofGER}-\ref{mIDofGER}), respectively.
The vertical magenta dashed lines indicate $\left< k \right>=10$.
The inset in (h) is an enlargement for small $\left< k \right>$.
}}
\label{Fig02}
\end{center}
\end{figure}

Having Eqs.~(\ref{NKofGER}-\ref{mIDofGER}) as a guide, in what follows we compute the average
values of the logarithm of the MTIs listed in Eqs.~(\ref{NK}-\ref{mID}).
All averages are computed over ensembles of $10^7/n$ ER networks characterized
by the parameter pair $(n,p)$.

In Fig.~\ref{Fig01} we present the average logarithm of the eight MTIs
of Eqs.~(\ref{NK}-\ref{mID}) as a function of the probability $p$ of ER networks of sizes
$n=\{125,250,500,1000\}$. Since $\left< \ln X_\Pi(G_{\tbox{ER}}) \right><0$ for $R_\Pi$, $H_\Pi$ and $\chi_\Pi$
we conveniently plotted $-\left< \ln X_\Pi(G_{\tbox{ER}}) \right>$ in log scale to have a detailed view of
the data for small $p$, see Figs.~\ref{Fig01}(e-g).
Thus we observe that $\left< \ln X_\Pi(G_{\tbox{ER}}) \right>$ for $NK$, $\Pi_{1,2}$ and $\Pi_1^*$
[for $R_\Pi$, $H_\Pi$ and $\chi_\Pi$] is a monotonically increasing [monotonically decreasing] function of $p$.
In contrast, $\left< \ln ID_\Pi(G_{\tbox{ER}}) \right>$ is a nonmonotonic function of $p$, see Fig.~\ref{Fig01}(h).

Then, following Eqs.~(\ref{NKofGER}-\ref{mIDofGER}), in Fig.~\ref{Fig02} we show
again the average logarithm of the MTIs but now normalized to the network size,
$\left< \ln X_\Pi(G_{\tbox{ER}}) \right>/n$, as a function of $\left< k \right>$.
As can be clearly observed in this figure, the curves $\left< \ln X_\Pi(G_{\tbox{ER}}) \right>/n$ versus
$\left< k \right>$ fall one on top of the other for different network sizes; so these indices are properly
scaled where $\left< k \right>$ works as the scaling parameter. That is, $\left< k \right>$ is the
parameter that fixes the average values of the normalized MTIs on ER networks.
It is remarkable that the scaling of $\left< \ln X_\Pi(G_{\tbox{ER}}) \right>/n$ with $\left< k \right>$,
expected for $\left<  k \right>\gg 1$ according to Eqs.~(\ref{NKofGER}-\ref{mIDofGER}),
works perfectly well for all values of $\left<  k \right>$; even for $\left<  k \right>\ll 1$.
Also, in Figs.~\ref{Fig02}, we show that Eqs.~(\ref{NKofGER}-\ref{mIDofGER})
(orange-dashed lines) indeed describe well the data (thick full curves) for $\left<  k \right>\ge 10$,
which can be regarded as de dense limit of the ER model.

\begin{figure}[t!]
\begin{center}
\includegraphics[width=0.75\textwidth]{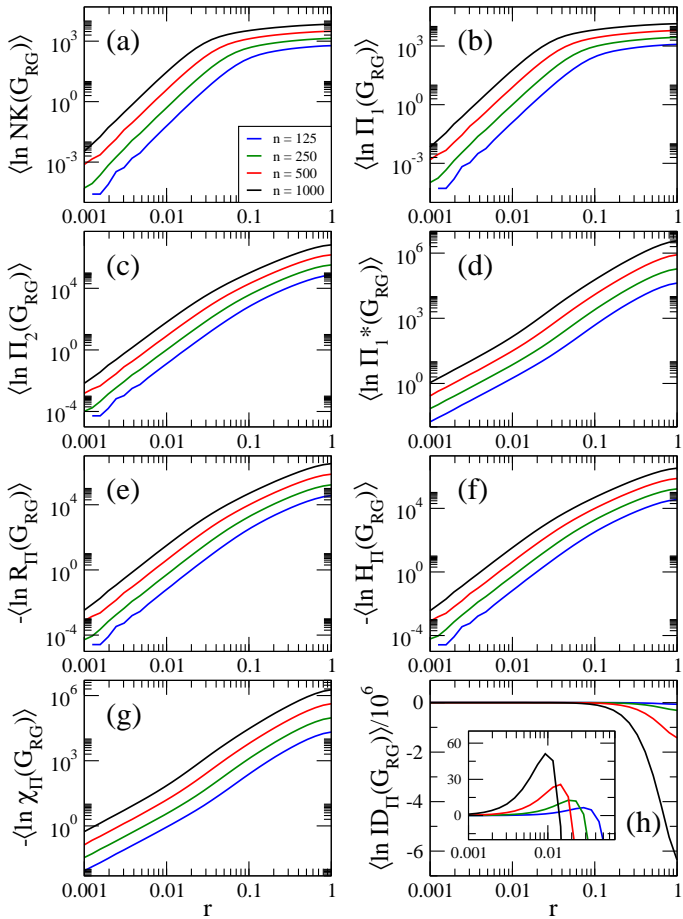}
\caption{\footnotesize{
Average logarithm of the multiplicative topological indices of Eqs.~(\ref{NK}-\ref{mID}) as a function of
the connection radius $r$ of random geometric graphs of size $n$.
The inset in (h) is an enlargement for small $r$.
}}
\label{Fig03}
\end{center}
\end{figure}
\begin{figure}[t!]
\begin{center}
\includegraphics[width=0.75\textwidth]{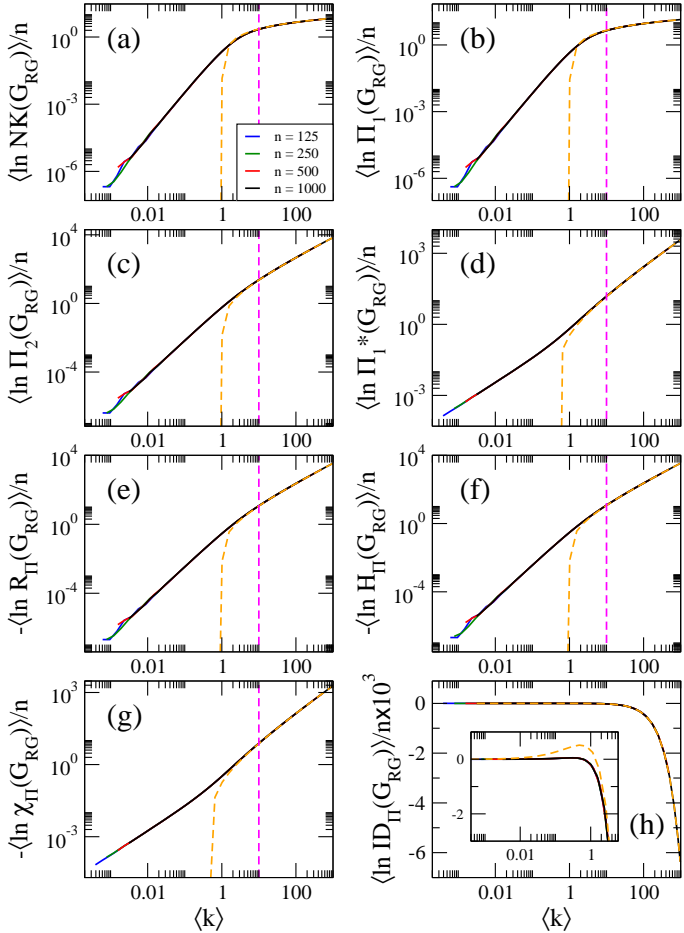}
\caption{\footnotesize{
Average logarithm of the multiplicative topological indices of Eqs.~(\ref{NK}-\ref{mID}), normalized to the
graph size $n$, as a function of the average degree $\left< k \right>$ of random geometric graphs.
Orange dashed lines in (a-h) are Eqs.~(\ref{NKofGRG}-\ref{mIDofGRG}), respectively.
The vertical magenta dashed lines indicate $\left< k \right>=10$.
The inset in (h) is an enlargement for small $\left< k \right>$.
}}
\label{Fig04}
\end{center}
\end{figure}

\subsection{Multiplicative topological indices on random geometric graphs}
\label{RG}

As in the previous Subsection, here we start by exploring the dense limit.
Indeed, for RG graphs in the dense limit, i.e. when $\left<  d \right>\gg 1$, we can approximate
$d_u \approx d_v \approx \left<  d \right>$, where
\begin{equation}
\label{kRG}
\left< d \right> = (n-1)g(r)
\end{equation}
and~\cite{EM15}
\begin{equation}
g(r) =
\left\{
\begin{array}{ll}
           r^2  \left[ \pi - \frac{8}{3}r +\frac{1}{2}r^2 \right] , & 0 \leq r \leq 1 \, ,
           \vspace{0.25cm} \\
             \frac{1}{3} - 2r^2 \left[ 1 - \arcsin(1/r) + \arccos(1/r) \right] \vspace{0.15cm} \\
             \qquad +\frac{4}{3}(2r^2+1) \sqrt{r^2-1} -\frac{1}{2}r^4 , & 1 \leq r \leq \sqrt{2} \, .
\end{array}
\right.
\label{g(r)}
\end{equation}
Thus, we obtain:
\begin{equation}
\label{NKofGRG}
\frac{\ln NK(G_{\tbox{RG}})}{n} \approx \ln \left<  d \right> ,
\end{equation}
\begin{equation}
\label{P1ofGRG}
\frac{\ln \Pi_1(G_{\tbox{RG}})}{n} \approx 2 \ln \left<  d \right> ,
\end{equation}
\begin{equation}
\label{P2ofGRG}
\frac{\ln \Pi_2(G_{\tbox{RG}})}{n} \approx \left<  d \right> \ln \left<  d \right> ,
\end{equation}
\begin{equation}
\label{P1*ofGRG}
\frac{\ln \Pi_1^*(G_{\tbox{RG}})}{n} \approx \frac{1}{2} \left<  d \right> \ln (2\left<  d \right>) ,
\end{equation}
\begin{equation}
\frac{\ln R_\Pi(G_{\tbox{RG}})}{n} \approx -\frac{1}{2} \left<  d \right> \ln \left<  d \right> ,
\label{mRofGRG}
\end{equation}
\begin{equation}
\label{mHofGRG}
\frac{\ln H_\Pi(G_{\tbox{RG}})}{n} \approx -\frac{1}{2} \left<  d \right> \ln \left<  d \right> ,
\end{equation}
\begin{equation}
\frac{\ln \chi_\Pi(G_{\tbox{RG}})}{n} \approx -\frac{\ln 2}{4} \left<  d \right>  - \frac{1}{4}\left<  d \right> \ln \left<  d \right> ,
\label{mXofGRG}
\end{equation}
and
\begin{equation}
\frac{\ln ID_\Pi(G_{\tbox{RG}})}{n} \approx \frac{\ln 2}{2} \left<  d \right>  - \left<  d \right> \ln \left<  d \right>.
\label{mIDofGRG}
\end{equation}

\begin{figure}[t!]
\begin{center}
\includegraphics[width=0.75\textwidth]{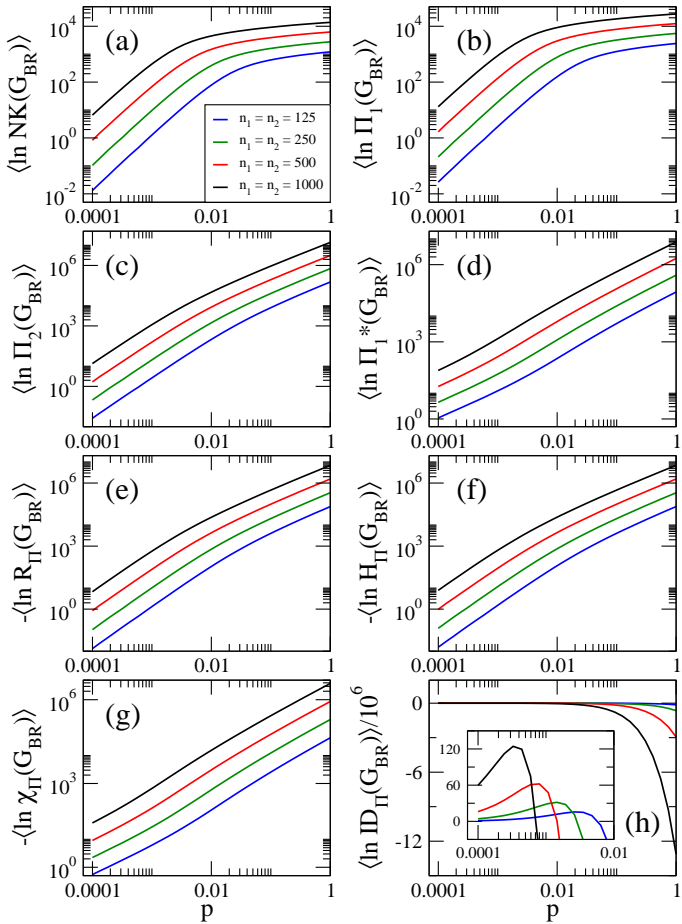}
\caption{\footnotesize{
Average logarithm of the multiplicative topological indices of Eqs.~(\ref{NK}-\ref{mID}) as a function of the
probability $p$ of bipartite random networks of sizes $n_1$ and $n_2$.
In all panels $n_1=n_2=\{125,250,500,1000\}$.
The inset in (h) is an enlargement for small $p$.
}}
\label{Fig05}
\end{center}
\end{figure}
\begin{figure}[t!]
\begin{center}
\includegraphics[width=0.75\textwidth]{Fig06.eps}
\caption{\footnotesize{
Average logarithm of the multiplicative topological indices of Eqs.~(\ref{NK}-\ref{mID}), normalized to the
network size $n$, as a function of the average degree $\left< k \right>$ of bipartite random networks of
sizes $n_1$ and $n_2$. In all panels $n_1=n_2=\{125,250,500,1000\}$.
Orange dashed lines in (a,b) are Eqs.~(\ref{NKofGER},\ref{P1ofGER}), respectively.
Orange dashed lines in (c-h) are Eqs.~(\ref{P2ofGBR}-\ref{mIDofGBR}), respectively.
The vertical magenta dashed lines indicate $\left< k \right>=10$.
The inset in (h) is an enlargement for small $\left< k \right>$.
}}
\label{Fig06}
\end{center}
\end{figure}

Remarkably, the approximate Eqs.~(\ref{NKofGRG}-\ref{mIDofGRG}) for RG graphs are exactly the same as
the corresponding equations for ER graphs, see Eqs.~(\ref{NKofGER}-\ref{mIDofGER}); however note that the
definition of $\left<  d \right>$ is different for both models, i.e. compare Eqs.~(\ref{k}) and (\ref{kRG}-\ref{g(r)}).

Then, in Fig.~\ref{Fig03} we present the average logarithm of the eight MTIs
of Eqs.~(\ref{NK}-\ref{mID}) as a function of the connection radius $r$ of RG graphs of sizes
$n=\{125,250,500,1000\}$. All averages are computed over ensembles of $10^7/n$
random graphs, each ensemble is characterized by a fixed parameter pair $(n,r)$.

For comparison purposes, Fig.~\ref{Fig01} for ER networks and Fig.~\ref{Fig03} for RG graphs have the same
format and contents.
In fact, all the observations made in the previous Subsection for ER networks are also valid for RG graphs
by replacing $G_{\tbox{ER}}\to G_{\tbox{RG}}$ and $p\to g(r)$.
Therefore, in Fig.~\ref{Fig04} we plot $\left< \ln X_\Pi(G_{\tbox{RG}}) \right>/n$ as a function of $\left< k \right>$
showing that all curves are properly scaled; that is, curves for different graph sizes fall on top of each other.
Also, in Fig.~\ref{Fig04}, we show that Eqs.~(\ref{NKofGRG}-\ref{mIDofGRG})
(orange-dashed lines) indeed describe well the data (thick full curves) for $\left<  k \right>\ge 10$,
which can be regarded as de dense limit of RG graphs.

\subsection{Multiplicative topological indices on bipartite random networks}
\label{BR}

We start by writing approximate expressions for the MTIs on BR networks in the dense limit.
Moreover, since edges in a bipartite network join vertices of different sets, and we are labeling
here the sets as set 1 and set 2, we replace $d_u$ by $d_1$ and $d_v$ by $d_2$ in the expression
for the MTIs.
Thus, when $n_1p\gg 1$ and $n_2p\gg 1$, we can approximate $d_u = d_1 \approx \left<  d_1 \right>$
and $d_v = d_2 \approx \left<  d_2 \right>$ in Eqs.~(\ref{NK}-\ref{mID}), with
\begin{equation}
\label{kBR}
\left< d_{1,2} \right> = n_{2,1}p .
\end{equation}
Therefore, in the dense limit, the MTIs of Eqs.~(\ref{P2}-\ref{mID}) on BR networks are
well approximated by:
\begin{equation}
\label{P2ofGBR}
\frac{\ln \Pi_2(G_{\tbox{BR}})}{n_{1,2}} \approx \left<  d_{1,2} \right> \ln (\left<  d_1 \right>  \left<  d_2 \right>),
\end{equation}
\begin{equation}
\label{P1*ofGBR}
\frac{\ln \Pi_1^*(G_{\tbox{BR}})}{n_{1,2}} \approx \left<  d_{1,2} \right> \ln (\left<  d_1 \right>  + \left<  d_2 \right>) ,
\end{equation}
\begin{equation}
\frac{\ln R_\Pi(G_{\tbox{BR}})}{n_{1,2}} \approx -\frac{1}{2} \left< d_{1,2} \right> \ln \left( \left<  d_1 \right> \left<  d_2 \right> \right) ,
\label{mRofGBR}
\end{equation}
\begin{equation}
\label{mHofGBR}
\frac{\ln H_\Pi(G_{\tbox{BR}})}{n_{1,2}} \approx \left< d_{1,2} \right> \left[ \ln 2 - \ln \left( \left<  d_1 \right> + \left<  d_2 \right> \right) \right],
\end{equation}
\begin{equation}
\frac{\ln \chi_\Pi(G_{\tbox{BR}})}{n_{1,2}} \approx -\frac{1}{2} \left< d_{1,2} \right> \ln \left( \left<  d_1 \right> + \left<  d_2 \right> \right) ,
\label{mXofGBR}
\end{equation}
and
\begin{equation}
\frac{\ln ID_\Pi(G_{\tbox{BR}})}{n_{1,2}} \approx \left< d_{1,2} \right> \ln \left( \frac{1}{\left<  d_1 \right>^2} + \frac{1}{\left<  d_2 \right>^2} \right) .
\label{mIDofGBR}
\end{equation}
Above we used $\mid E(G_{\tbox{BR}}) \mid=n_1n_2p=n_{1,2}\left<  d_{1,2} \right>$.
We note that we are not providing approximate expressions for $NK(G_{\tbox{BR}})$ nor $\Pi_1(G_{\tbox{BR}})$.

Now we compute MTIs on ensembles of $10^7/n$ BR networks. In contrast to
ER and RG networks now the BR network ensembles are characterized by three parameters: $n_1$, $n_2$, and $p$.
For simplicity, but without lost of generality, in our numerical calculations we consider $n_1=n_2$.
It is remarkable to notice that in the case of $n_1=n_2=n/2$, where $\left< k_1 \right>=\left< k_2 \right>=\left< k \right>=np/2$,
Eqs.~(\ref{P2ofGBR}-\ref{mIDofGBR}) reproduce Eqs.~(\ref{P2ofGER}-\ref{mIDofGER}).

Then, in Fig.~\ref{Fig05} we present the average logarithm of the eight MTIs
of Eqs.~(\ref{NK}-\ref{mID}) as a function of the probability $p$ of BR networks of sizes
$n_1=n_2=\{125,250,500,1000\}$.
Therefore, by plotting $\left< \ln X_\Pi(G_{\tbox{BR}}) \right>/n$ as a function of $\left< k \right>$,
see Fig.~\ref{Fig06}, we confirm that the curves of the MTIs on BR networks are properly scaled,
as predicted by Eqs.~(\ref{P2ofGBR}-\ref{mIDofGBR}); see the orange dashed lines in Fig.~\ref{Fig06}(c-h).
Here, we can also say that $\left<  k \right>\ge 10$ can be regarded as de dense limit of BR networks.
We have verified (not shown here) that we arrive to equivalent conclusions when $n_1\neq n_2$.

Note that in panels (a,b) of Fig.~\ref{Fig06} we included Eqs.~(\ref{NKofGER},\ref{P1ofGER}) as
orange dashed lines. Those equations were obtained for ER networks, however they reproduce
perfectly well both $NK(G_{\tbox{BR}})$ and $\Pi_1(G_{\tbox{BR}})$ when $\left<  k \right>\ge 10$
since we are considering $n_1=n_2=n/2$.

\section{Conclusions}
\label{Conclusions}

In this work we have performed a thorough analytical and statistical study of multiplicative topological indices (MTIs)
on random networks.
As models of random networks we have used
Erd\"os-R\'enyi networks, random geometric graphs, and bipartite random networks.

From an analytical viewpoint, we find several inequalities that relate multiplicative indices to their additive versions.

Within a statistical approach, we show that the average logarithm of the MTIs,
$\left< \ln X_\Pi(G) \right>$, normalized to the order of the network, scale with
the network average degree $\left< k \right>$.
Thus, we conclude that $\left< k \right>$ is the parameter that fixes the average values of the
logarithm of the MTIs on random networks. Moreover, the equivalence among
Eqs.~(\ref{NKofGER}-\ref{mIDofGER}) for Erd\"os-R\'enyi networks,
Eqs.~(\ref{NKofGRG}-\ref{mIDofGRG}) for random geometric graphs, and
Eqs.~(\ref{P2ofGBR}-\ref{mIDofGBR}) for bipartite random networks (when the subsets are equal in size)
allows us to state a scaling law that connects the three graph models.
That is, given the multiplicative topological index $X_\Pi$, the average of its logarithm divided by the
network size is the same function of the average degree regardless the network model:
\begin{equation}
\frac{\left< \ln X_\Pi(G_{\tbox{ER}}) \right>}{n} \approx \frac{\left< \ln X_\Pi(G_{\tbox{RG}}) \right>}{n} \approx \frac{\left< \ln X_\Pi(G_{\tbox{BR}}) \right>}{n}
\approx f(\left< k \right>) .
\label{fofk}
\end{equation}
To validate Eq.~(\ref{fofk}) we choose the Narumi-Katayama index, see Eq.~(\ref{NK}),
apply it to Erd\"os-R\'enyi networks, random geometric graphs and bipartite random networks and plot
$\left< \ln NK_\Pi(G) \right>/n$ in Fig.~\ref{Fig07}(a). There, we can clearly see that the curves corresponding
to the three network models fall one on top of the other, thus validating Eq.~(\ref{fofk}). Also notice that we are
using networks of different sizes to stress the scaling law in Eq.~(\ref{fofk}).
We observe the same scaling behavior in all the other MTIs of Eqs.~(\ref{P1}-\ref{mID}), as can be seen
in Figs.~\ref{Fig07}(b,c) where we present $\left< \ln \chi_\Pi(G) \right>/n$ and $\left< \ln ID_\Pi(G) \right>/n$,
respectively.

\begin{figure}[t!]
\begin{center}
\includegraphics[width=0.95\textwidth]{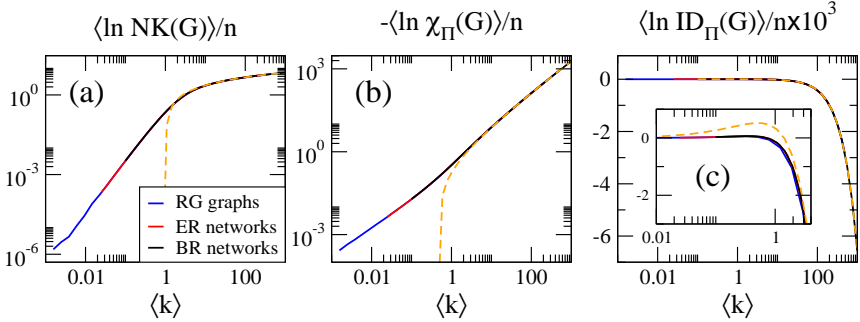}
\caption{\footnotesize{
Average logarithm of (a) $NK(G)$, (b) $\chi_\Pi(G)$, and (c) $ID_\Pi(G)$, normalized to the
network size $n$, as a function of the average degree $\left< k \right>$ of random geometric
graphs of size $n=500$, Erd\"os-R\'enyi networks of size $n=250$ and bipartite random networks
of size $n=2n_1=2n_2=2000$.
Orange dashed lines are (a) Eq.~(\ref{NKofGER}), (b) Eq.~(\ref{mXofGER}), and (c) Eq.~(\ref{mIDofGER}).
The inset in (c) is an enlargement for small $\left< k \right>$.
}}
\label{Fig07}
\end{center}
\end{figure}

Finally, it is fair to mention that we found that the multiplicative version of the geometric-arithmetic index
$GA_\Pi(G)$ on random networks does not scale with the average degree. This situation
is quite unexpected for us since all other MTIs we study here scale with $\left< k \right>$.
In addition, our previous statistical and theoretical studies of topological indices of the form
$X_\Sigma(G)$, see Eq.~(\ref{TI}), showed that the normalized geometric-arithmetic index $GA_\Sigma(G)$
scales with $\left< k \right>$~\cite{AHMS20}.
Thus, we believe that the multiplicative index $GA_\Pi(G)$ on random graphs requieres further analysis.

We hope that our work may motivate further analytical as well as computational studies of
multiplicative topological indices on random networks.

\bmhead{Acknowledgments}

The research of J.M.R. and J.M.S. was supported by a grant from Agencia Estatal de Investigaci\'on (PID2019-106433GBI00/AEI/10.13039/501100011033), Spain.
J.M.R. was supported by the Madrid Government (Comunidad de Madrid-Spain) under the Multiannual Agreement with UC3M in the line of Excellence of University Professors (EPUC3M23), and in the context of the V PRICIT (Regional Programme of Research and Technological Innovation).

\section*{Declarations}

The authors declare no conflict of interest. The founding sponsors had no role in the design of the study; in the collection, analyses, or interpretation of data; in the writing of the manuscript, and in the decision to publish the results.



\end{document}